\newtheorem{thm}{Theorem}[section]
\newtheorem{lem}[thm]{Lemma}
\newtheorem{prop}[thm]{Proposition}
\newtheorem{rem}[thm]{Remark}
\newcommand{\bremark}{\begin{rem} \textup}
\newcommand{\eremark}{\end{rem} }
\newcommand{\cuad}{{\sqcap\kern-.68em\sqcup}}
\renewcommand{\rho}{\varrho}
\renewcommand{\theta}{\vartheta}
\begin{document}

\subjclass[2000]{35J70; 35J62; 35B06}

\parindent 0pc
\parskip 6pt
\overfullrule=0pt

\title[Symmetry of solutions]{Symmetry of solutions of some semilinear elliptic equations with singular nonlinearities}

\author {A. Canino*, M. Grandinetti* and B. Sciunzi*}

\date{\today}

\address{* Dipartimento di Matematica, UNICAL,
Ponte Pietro  Bucci 31B, 87036 Arcavacata di Rende, Cosenza, Italy.}

\email{canino@mat.unical.it, grandinetti@mat.unical.it, sciunzi@mat.unical.it}

\keywords{Singular semilinear equations, symmetry of solutions, moving plane method.}

\subjclass[2000]{35B01,35J61,35J75.}


\begin{abstract}
We consider positive solutions to the singular semilinear elliptic equation $-\Delta\,u\,=\,\frac{1}{u^\gamma}\,+\,f(u)$, in bounded smooth domains, with zero Dirichlet boundary conditions.\\
\noindent We provide some weak and strong maximum principles for the $H^1_0(\Omega)$ part of the solution (the solution $u$  does generally not belong to $H^1_0(\Omega)$), that allow to deduce symmetry and monotonicity properties of the solutions, via the \emph{Moving Plane Method}.
\end{abstract}

\maketitle

\date{\today}



\maketitle

\section{introduction}

In this paper we study symmetry and monotonicity properties of the solutions to the problem
\begin{equation}
	\label{problem}
\begin{cases}
-\Delta\,u\,=\,\frac{1}{u^\gamma}\,+\,f(u) & \text{in $\Omega$,}  \\
u> 0 & \text{in $\Omega$,}  \\
u=0 & \text{on $\partial\Omega$.}
\end{cases}
\end{equation}
where $\gamma>0$, $\Omega $ is a bounded smooth domain and
$u\in\,C(\overline{\Omega})\cap C^2(\Omega)$. \\
\noindent Our main results will be proved under the following assumption
\begin{itemize}
\item[($H_p$)] $f$ is locally Lipschitz continuous, non-decreasing, $f(s)>0$ for $s>0$ and $f(0)\geq 0$.
\end{itemize}
As a model problem we may consider solutions to $-\Delta\,u\,=\,\frac{1}{u^\gamma}\,+\,u^q$ with $q>0$.\\
\noindent Since the pioneer results in \cite{crandall} and \cite{stuart}, singular semilinear elliptic equations have been considered by several authors. We refer to \cite{boccardo2,boccardo,Cancan,CanDeg,gras,GOW,saccon,saccon2,lair,lazer,marco}.\\
 The variational characterization of problem \eqref{problem} is not trivial. In fact, already in the case $f\equiv 0$, the condition $\gamma<3$ is necessary to have solutions in $H^1_0(\Omega)$ and  to have the associated energy functional $I \neq +\infty$, see  \cite{lazer}. A first attempt in this direction can be found in \cite{saccon} in the case $\gamma\leq 1$. \\
\noindent Later in \cite{CanDeg} a general approach was developed for any $\gamma>0$. The main idea in \cite{CanDeg}, that will be a key ingredient in the present paper, is a translation
of the energy functional and of the functions space used, based on the decomposition of the solutions of \eqref {problem}\,as
\begin{equation}\label{decomposition}
u=\, u_0\,+\,w
\end{equation}
where $w\in H^1_0(\Omega)$ and  $u_0\in\,C(\overline{\Omega})\cap C^2(\Omega)$ is the solution to the problem:
\begin{equation}
	\label{problem0}
\begin{cases}
-\Delta\,u_0\,=\,\frac{1}{{u_0}^\gamma}\, & \text{in $\Omega$,}  \\
u_0> 0 & \text{in $\Omega$,}  \\
u_0=0 & \text{on $\partial\Omega$.}
\end{cases}
\end{equation}

\noindent The solution $u_0$  is unique (see Lemma 2.8 in \cite{CanDeg}) and can be found via a sub-super solution method like in \cite{CanDeg} or via a truncation argument as in \cite{boccardo}. It follows by the comparison argument used in the proof of \cite{CanDeg} that the solution $u_0$ is continuous up to the boundary and is bounded away from zero in the interior of $\Omega$. This latter information also follows by \cite{boccardo} where the solution $u_0$ is obtained as the limit of an increasing sequence of positive solutions to a regularized problem.

\noindent The equation $-\Delta\,u_0\,=\,\frac{1}{{u_0}^\gamma}$  consequently can be understood in the weak distributional sense with test functions with compact support in $\Omega$, that is
\begin{equation}\label{eq:fgakghfhksajdgfja}
\int_\Omega\,(Du_0,  D\varphi)\,dx  = \int_\Omega \frac{\varphi}{{u_0}^\gamma}\,dx  \qquad\forall \varphi\in C^1_c(\Omega).
\end{equation}
Actually, the solution is fulfilled in the classical sense in the interior of $\Omega$ by standard regularity results,
since $u_0$ is strictly positive in the interior of the domain.\\

\noindent In any case, taking into account \cite{lazer}, for $\gamma\geq 3$ $u_0$ does not belong to $H^1_0(\Omega)$, hence $u$ does not belong to $H^1_0(\Omega)$ too.

\noindent The proof of our symmetry result is based on the well known \emph{Moving Plane Method}~(see \cite{serrin}),
that was used in a clever way in the celebrated paper \cite{GNN} in the semilinear nondegenerate case. Actually our proof is more similar to the one of \cite{BN} and is based on the weak comparison principle in small domains.\\

\noindent Because of the singular nature of our problem, we have to take care of two difficulties, namely:
\begin{itemize}
\item[-]  $u$ does not belong to $H^1_0(\Omega)$,
\item[-]
$\frac{1}{s^\gamma}\,+\,f(s)$ is not  Lipschitz continuous at zero.
\end{itemize}

\noindent

 This causes that a straightforward modification of the moving plane technique is not possible in our setting and for this reason we need a new technique based on the decomposition in \eqref{decomposition}.

\noindent Let us state our symmetry result:

\begin{thm}\label{t3}
Let $u\in\,C(\overline{\Omega})\cap C^2(\Omega)$ be  a solution to \eqref{problem} with $f$ satisfying $(H_p)$.  Assume that the domain $\Omega$ is  strictly convex w.r.t. the $\nu-$direction $(\nu \in S^{N-1})$ and symmetric w.r.t. $T_0^\nu$, where
$$ T_0^\nu=\{x\in \mathbb{R}^N : x\cdot \nu=0\}.$$
Then $u$ is symmetric w.r.t. $T_0^\nu$ and non-decreasing w.r.t. the $\nu-$direction in~$\Omega_0^\nu$, where
\begin{equation}\nonumber
\Omega_0^\nu=\{x\in \Omega:x\cdot \nu <0\}\,.
\end{equation}
 Moreover, if  $\Omega$ is a ball, then $u$ is radially symmetric with $\frac{\partial u}{\partial r}(r)<0$ for $r\neq 0$.
\end{thm}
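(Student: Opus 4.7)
The plan is to adapt the moving-plane method in the spirit of \cite{BN}, the two structural obstructions being that $u \notin H^1_0(\Omega)$ in general and that $s \mapsto s^{-\gamma} + f(s)$ is not Lipschitz at $0$. Both issues will be handled via the decomposition \eqref{decomposition}. Set $\lambda_0 = \inf_{x\in\Omega} x\cdot \nu$ and, for $\lambda \in (\lambda_0, 0]$, let $\Omega_\lambda^\nu = \{x\in \Omega : x\cdot\nu < \lambda\}$; denote by $x_\lambda$ the reflection of $x$ through $T_\lambda^\nu$ and set $u_\lambda(x) = u(x_\lambda)$. Strict convexity in the direction $\nu$ guarantees that the reflected region stays inside $\Omega$ for every such $\lambda$, so $u_\lambda$ solves the same equation as $u$ on $\Omega_\lambda^\nu$. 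The goal is to show
\[
\Lambda := \bigl\{ \lambda \in (\lambda_0, 0] : u \leq u_\mu \text{ in } \Omega_\mu^\nu \text{ for every } \mu \in (\lambda_0, \lambda] \bigr\} = (\lambda_0, 0],
\]
so that $u \leq u_0$ in $\Omega_0^\nu$; applying the same argument in the direction $-\nu$ gives the reverse inequality, hence symmetry and monotonicity.

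The crucial observation is that $(u - u_\lambda)^+$ automatically has compact support inside $\Omega_\lambda^\nu \cup T_\lambda^\nu$. Indeed, on $\partial \Omega \cap \partial \Omega_\lambda^\nu$ one has $u = 0$, while $u_\lambda$ is the value of $u$ at a fixed interior point of $\Omega$, hence strictly positive and continuous, so $(u - u_\lambda)^+$ vanishes in a uniform neighborhood of that portion of the boundary. Consequently $(u - u_\lambda)^+ \in H^1_0(\Omega_\lambda^\nu)$ is a legitimate test function, despite the fact that $u \notin H^1_0(\Omega)$. Testing the difference of the equations satisfied by $u$ and $u_\lambda$ yields
\[
\int_{\Omega_\lambda^\nu} |\nabla (u-u_\lambda)^+|^2\,dx = \int_{\Omega_\lambda^\nu} \bigl( u^{-\gamma} - u_\lambda^{-\gamma} + f(u) - f(u_\lambda) \bigr)(u-u_\lambda)^+\,dx.
\]
On the support $\{u > u_\lambda\}$ the singular contribution is non-positive, because $s\mapsto s^{-\gamma}$ is decreasing: here the singularity is an ally rather than an obstacle. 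Since $u$ is bounded and $f$ is locally Lipschitz, the remaining term is controlled by $L\int|(u-u_\lambda)^+|^2$; Poincar\'e's inequality on the thin set $\Omega_\lambda^\nu$ absorbs this into the left-hand side when $|\Omega_\lambda^\nu|$ is small enough, giving $(u-u_\lambda)^+ \equiv 0$. This is the weak comparison in small domains.

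The rest of the argument is the standard moving-plane iteration. The above step shows $\Lambda \neq \emptyset$. Setting $\bar\lambda = \sup \Lambda$, assume by contradiction $\bar\lambda < 0$. Continuity gives $u \leq u_{\bar\lambda}$ in $\Omega_{\bar\lambda}^\nu$. The equation for $u_{\bar\lambda} - u$ in the interior is a linear elliptic inequality with coefficients locally bounded (since $u_0$ is bounded away from zero on compact subsets of $\Omega$), so the strong maximum principle — combined with the boundary behavior, which rules out the identity case since $\bar\lambda < 0$ — forces $u_{\bar\lambda} > u$ strictly in $\Omega_{\bar\lambda}^\nu$. Choosing a compact $K \subset \Omega_{\bar\lambda}^\nu$ with $|\Omega_{\bar\lambda+\delta}^\nu \setminus K|$ small, continuity in $\lambda$ gives $u_{\bar\lambda+\delta} > u$ on $K$, while the weak comparison of the previous paragraph handles the thin set $\Omega_{\bar\lambda+\delta}^\nu \setminus K$. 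This contradicts the definition of $\bar\lambda$ and proves $\bar\lambda = 0$. The radial case follows from the arbitrariness of $\nu$, and the strict sign $\partial u/\partial r < 0$ off the origin comes from the strong maximum principle and a Hopf-type argument applied to the linearised comparison. The main obstacle throughout is the weak comparison: the classical test function $(u - u_\lambda)^+$ is not a priori admissible, and the combination of the decomposition $u = u_0 + w$ with the fact that the positive part automatically cuts off away from $\partial\Omega$ is what restores admissibility and simultaneously defuses the non-Lipschitz character of the nonlinearity.
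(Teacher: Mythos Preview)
Your argument runs the moving plane directly on $u$, whereas the paper runs it entirely on the $H^1_0$ component $w=u-u_0$: it first shows $u_0\le (u_0)_\lambda^\nu$ by passing to the limit in regularized problems, then proves weak and strong comparison principles for $w$ versus $w_\lambda^\nu$ (the singular contribution now appears as the four--term combination $u_0^{-\gamma}-(u_{0,\lambda}^\nu)^{-\gamma}+(u_{0,\lambda}^\nu+w_\lambda^\nu)^{-\gamma}-(u_0+w)^{-\gamma}$, whose sign is handled by the algebraic Lemma~\ref{lemmaappendix}), and finally recovers the conclusion for $u$ by adding. You invoke the decomposition twice in words but never actually use it; your comparison is between $u$ and $u_\lambda$, with the singular part discarded simply by the monotonicity of $s\mapsto s^{-\gamma}$. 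If it worked, this would indeed be more economical.

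The gap is the admissibility of $(u-u_\lambda)^+$ as a test function. Your justification that its support stays away from $\partial\Omega$ --- because $u=0$ there while ``$u_\lambda$ is the value of $u$ at a fixed interior point, hence strictly positive'' --- fails on the corner set $\partial\Omega\cap T_\lambda^\nu$: there the reflection is the identity, so $u_\lambda=u=0$, and nothing you have written prevents the set $\{u>u_\lambda\}$ from accumulating at those corners. This matters exactly in the regime the paper targets: for $\gamma\ge 3$ one has $u\notin H^1(\Omega)$, $|\nabla u|$ blows up at the boundary, and neither $\int|\nabla(u-u_\lambda)^+|^2$ nor the individual singular integrals on the right-hand side are a priori finite near the corners; the identity obtained by ``testing the difference of the equations'' is therefore unjustified, and the Poincar\'e absorption cannot get started. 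The paper's detour through $w$ is not cosmetic: it is precisely what makes $(w-w_\lambda^\nu)^+$ a genuine $H^1_0$ object that can be approximated by $C_c^\infty$ functions supported where the equation is nonsingular, at the price of the more delicate sign analysis in Lemma~\ref{lemmaappendix}.
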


\noindent For the reader's convenience, we describe here below the scheme of the proof.

\begin{itemize}
\item [(i)]   Since, by \cite{boccardo}, $u_0$ is the limit of a sequence $u_n$ of solutions to a regularized problem \eqref{problem0T}, we deduce  symmetry and monotonicity properties of $u_n$, and consequently of $u_0$, applying the moving plane procedure  in a standard way to the regularized problem \eqref{problem0T}.

    \item [(ii)] By (i),  recalling the decomposition in \eqref {decomposition}\,: $u=u_0+w$, we are reduced to prove symmetry and monotonicity properties of $w$. To do this, in Section \ref{principi}, we prove some comparison principles for $w$ needed in the application of the moving plane procedure.

        \item [(iii)] In Section \ref{simmetria}, we carry out the adaptation
        of the moving plane procedure to the study of the monotonicity and symmetry of $w$.
         It is worth emphasizing that the moving plane procedure is applied in our approach only to the $H^1_0(\Omega)$ part of $u$.\\
         \noindent Note also that Theorem \ref{t3} is proved in Section \ref{simmetria2} exploiting the more general result Proposition \ref{prosimmetri2}.
\end{itemize}

\section{Notations}

To state the next results we need some notations. Let $\nu$ be a direction in $\mathbb{R}^N$ with $|\nu|=1$. Given a real number $\lambda$ we set
\begin{equation}\nonumber
T_\lambda^\nu=\{x\in \mathbb{R}^N:x\cdot\nu=\lambda\},
\end{equation}
\begin{equation}\nonumber
\Omega_\lambda^\nu=\{x\in \Omega:x\cdot \nu <\lambda\}
\end{equation}
and
\begin{equation}\nonumber
x_\lambda^\nu=R_\lambda^\nu(x)=x+2(\lambda -x\cdot\nu)\nu,
\end{equation}
that is the reflection trough the hyperplane $T_\lambda^\nu$. Moreover we set
\begin{equation}\nonumber
(\Omega_\lambda^\nu)'=R_\lambda^\nu(\Omega_\lambda^\nu).
\end{equation}
Observe that $(\Omega_\lambda^\nu)'$ may be not contained in $\Omega$. Also we take
\begin{equation}\nonumber
a(\nu)=\inf _{x\in\Omega}x\cdot \nu.
\end{equation}
When $\lambda >a(\nu)$, since $\Omega_\lambda^\nu$ is nonempty, we set
\begin{equation}\nonumber
\Lambda_1(\nu)=\{\lambda : (\Omega_t^\nu)'\subset \Omega\,\, \text{for any}\,\,a(\nu)< t\leq\lambda\},\end{equation}
and
\begin{equation}\nonumber
\lambda_1(\nu)=\sup \Lambda_1(\nu).
\end{equation}
Moreover we set
\begin{equation}\nonumber
u_\lambda^\nu(x)=u(x_\lambda^\nu)\,,
\end{equation}
 for any $a(\nu)<\lambda\leq \lambda_1(\nu)$.\\
 \noindent Recalling the decomposition  of the solutions of \eqref {problem} (see \eqref{decomposition}) as
\begin{equation}\nonumber
u=\, u_0\,+\,w,
\end{equation}
we set
\begin{equation}\nonumber
{u_0}_\lambda^\nu(x)={u_0}(x_\lambda^\nu)\,,
\end{equation}
and
\begin{equation}\nonumber
w_\lambda^\nu(x)=w(x_\lambda^\nu)\,.
\end{equation}
\section{Symmetry properties of $u_0$}\label{gg}
Basing on the construction of the solution $u_0$ of \eqref{problem0} we prove in this section some useful symmetry and monotonicity results for $u_0$.

\begin{prop}\label{prosimmetri2000}
Let $u_0\in\,C(\overline{\Omega})\cap C^2(\Omega)$ be the solution to \eqref{problem0}.
Then, for any $$a(\nu)<\lambda<\lambda_1(\nu)$$ we have
\begin{equation}\label{zxclu:1.8222000}
u_0(x)< {u_0}_{\lambda}^\nu(x),\qquad \forall x \in \Omega _{\lambda}^\nu\,
\end{equation}
and
\begin{equation}\label{zxclu:1.19222u000}
\frac{\partial u_0}{\partial \nu}(x) > 0, \qquad \forall x \in
\Omega _{\lambda_1(\nu)}^\nu.
\end{equation}
\end{prop}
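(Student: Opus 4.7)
The plan is to follow the approach outlined in item~(i) of the introduction: exploit the approximation of $u_0$ by solutions of the regularized problem of \cite{boccardo}, run the classical moving plane method on the approximating sequence, and then pass to the limit.

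Concretely, I recall that $u_0$ is the pointwise monotone increasing limit of a sequence $(u_n)_n$ of classical solutions to
$$
-\Delta u_n \,=\, \frac{1}{(u_n+1/n)^{\gamma}} \quad\text{in } \Omega, \qquad u_n=0 \text{ on } \partial\Omega,
$$
with $u_n\in C^{1,\alpha}(\overline{\Omega})$ and $u_n>0$ in $\Omega$. The nonlinearity $g_n(s)=(s+1/n)^{-\gamma}$ is smooth and, crucially, decreasing in $s$. Hence, for any $a(\nu)<\lambda\le \lambda_1(\nu)$, the function $v_n:=(u_n)_\lambda^\nu-u_n$ satisfies, by the mean value theorem applied to $g_n$,
$$
-\Delta v_n \,+\, c_n(x)\,v_n \,=\, 0 \qquad \text{in } \Omega_\lambda^\nu,
$$
with $c_n(x)=\gamma(\xi_n+1/n)^{-\gamma-1}\ge 0$ bounded, where $\xi_n$ lies between $u_n$ and $(u_n)_\lambda^\nu$. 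On $\partial\Omega_\lambda^\nu$ one has $v_n\ge 0$: on $T_\lambda^\nu\cap\overline{\Omega}$, $v_n=0$, while on $\partial\Omega\cap\overline{\Omega_\lambda^\nu}$, $u_n=0$ and $(u_n)_\lambda^\nu\ge 0$ since $x_\lambda^\nu\in\overline{\Omega}$ by the very definition of $\lambda_1(\nu)$. The weak maximum principle for $-\Delta+c_n$ (with $c_n\ge 0$) then gives $v_n\ge 0$, i.e., $u_n(x)\le (u_n)_\lambda^\nu(x)$ in $\Omega_\lambda^\nu$.

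Letting $n\to\infty$ (with $u_n\to u_0$ pointwise in $\Omega$, and uniformly on compacta thanks to the interior positivity of $u_0$ and standard elliptic regularity) yields the weak comparison $u_0(x)\le {u_0}_\lambda^\nu(x)$ in $\Omega_\lambda^\nu$. To upgrade this to the strict inequality \eqref{zxclu:1.8222000} for $\lambda<\lambda_1(\nu)$, I set $v={u_0}_\lambda^\nu-u_0\ge 0$. In the interior of $\Omega_\lambda^\nu$ both $u_0$ and ${u_0}_\lambda^\nu$ are smooth and strictly positive, so the same linearization produces $-\Delta v+c(x)v=0$ with $c(x)\ge 0$ bounded on compacta. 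The strong maximum principle for the operator $-\Delta+c$ then implies that in each connected component of $\Omega_\lambda^\nu$ either $v\equiv 0$ or $v>0$. The first alternative is excluded by picking any boundary point $x\in\partial\Omega\cap \overline{\Omega_\lambda^\nu}$ with $x\cdot\nu<\lambda$: since $\lambda<\lambda_1(\nu)$, the reflected point $x_\lambda^\nu$ lies in the interior of $\Omega$, so ${u_0}_\lambda^\nu(x)=u_0(x_\lambda^\nu)>0=u_0(x)$ there.

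Finally, to prove \eqref{zxclu:1.19222u000}, I fix $x_0\in\Omega_{\lambda_1(\nu)}^\nu$, set $\lambda=x_0\cdot\nu\in(a(\nu),\lambda_1(\nu))$ and apply the Hopf boundary lemma to $v={u_0}_\lambda^\nu-u_0$ at $x_0\in T_\lambda^\nu\cap\Omega$, which is a smooth boundary point of $\Omega_\lambda^\nu$ (interior to $\Omega$) where $v(x_0)=0$. Since the outer normal to $\Omega_\lambda^\nu$ at $x_0$ is $\nu$ and $\partial_\nu({u_0}_\lambda^\nu)(x_0)=-\partial_\nu u_0(x_0)$, a direct computation gives $\partial_\nu v(x_0)=-2\,\partial_\nu u_0(x_0)$; Hopf forces $\partial_\nu v(x_0)<0$, whence $\partial_\nu u_0(x_0)>0$. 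The only delicate point, and the step I expect to be the main obstacle, is verifying that the linearization coefficient $c(x)$ is bounded on compact subsets of $\Omega_\lambda^\nu$ so that the strong maximum principle and the Hopf lemma apply; this ultimately reduces to the interior positivity of $u_0$ (and of ${u_0}_\lambda^\nu$ on compacta of $\Omega_\lambda^\nu$) recalled in the introduction via \cite{boccardo,CanDeg}.
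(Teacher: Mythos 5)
Your proposal follows the same skeleton as the paper's proof: approximate $u_0$ by the increasing sequence $u_n$ of solutions to the regularized problem \eqref{problem0T} from \cite{boccardo}, establish the comparison $u_n\le (u_n)_\lambda^\nu$ in $\Omega_\lambda^\nu$, and pass to the limit. The execution differs in two ways that are worth recording. First, the paper simply invokes the full moving plane machinery of \cite{BN,GNN} for each $u_n$ (continuation in $\lambda$ starting near $a(\nu)$), whereas you observe that the regularized nonlinearity $s\mapsto(s+1/n)^{-\gamma}$ is non-increasing, so the linearized coefficient $c_n=-g_n'(\xi_n)\ge 0$ is of the right sign and the weak maximum principle yields the comparison directly for \emph{every} admissible $\lambda$, with no sliding argument; this is a genuine (and correct) simplification, special to \eqref{problem0T}. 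Second, the paper asserts that the strict inequality \eqref{zxclu:1.8222000} ``follows passing to the limit'' from the strict inequalities for $u_n$ — which, taken literally, only gives $u_0\le {u_0}_\lambda^\nu$ — while you correctly perform the strictness upgrade on the limit function itself, via the strong maximum principle for the linearized singular equation on compacta of $\Omega_\lambda^\nu$ (where $u_0$ is bounded away from zero), and then obtain \eqref{zxclu:1.19222u000} by Hopf's lemma on $T_\lambda^\nu\cap\Omega$ rather than by an SMP for $\partial_\nu u_0$. Your treatment is the more careful one on this point. One minor imprecision: to exclude the alternative $v\equiv 0$ on a connected component $\mathcal{C}$ of $\Omega_\lambda^\nu$, it is not true that \emph{any} $x\in\partial\Omega\cap\overline{\Omega_\lambda^\nu}$ with $x\cdot\nu<\lambda$ has reflection in the open set $\Omega$ (on a smoothed rectangle, points of the lateral faces reflect to boundary points); you should take, e.g., the point of $\overline{\mathcal{C}}$ minimizing $x\cdot\nu$, which necessarily lies on $\partial\Omega$, has $x\cdot\nu<\lambda$, and whose reflection does land in $\Omega$. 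This is a standard repair and does not affect the validity of the argument.
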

\begin{proof}
Let $u_n\in H^1_0(\Omega)\cap C(\overline\Omega)$ be the unique solution to
\begin{equation}
	\label{problem0T}
\begin{cases}
-\Delta\,u_n\,=\,\frac{1}{(u_n+\frac{1}{n})^\gamma}\, & \text{for $x\in\Omega$,}  \\
u_n> 0 & \text{for $x\in\Omega$,}  \\
u_n=0 & \text{for $x\in\partial\Omega$.}
\end{cases}
\end{equation}
The existence of $u_n$ was proved in \cite{boccardo} and the uniqueness follows by \cite{CanDeg}. Since the problem is no more singular, by standard
elliptic estimates it follows that $u_n\in C^2(\overline\Omega)$. Therefore we can use the moving plane technique exactly as in \cite{BN,GNN} to deduce that the statement of our proposition holds true for each $u_n$.
By \cite{boccardo} $u_n$ converges to $u_0$ a.e. as $n$ tends to infinity and therefore \eqref{zxclu:1.8222000} follows passing to the limit.
Finally in the same way
\[
\frac{\partial u_0}{\partial \nu}(x) \geq 0, \qquad \forall x \in
\Omega _{\lambda_1(\nu)}^\nu\,,
\]
and therefore \eqref{zxclu:1.19222u000} follows via the strong maximum principle.
\end{proof}
As a consequence of Proposition \ref{prosimmetri2000}, we get
\begin{prop}\label{t3000}
Let $u_0\in\,C(\overline{\Omega})\cap C^2(\Omega)$ be the solution of \eqref{problem0} and assume that the domain $\Omega$  is strictly convex w.r.t. the $\nu-$direction $(\nu \in S^{N-1})$ and symmetric w.r.t. $T_0^\nu$.
Then $u_0$ is symmetric w.r.t. $T_0^\nu$ and non-decreasing w.r.t. the $\nu-$direction in~$\Omega_0^\nu$.
 Moreover, if  $\Omega$ is a ball, then $u_0$ is radially symmetric with $\frac{\partial u_0}{\partial r}(r)<0$ for $r\neq 0$.
\end{prop}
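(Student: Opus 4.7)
The plan is to derive Proposition \ref{t3000} as an essentially immediate consequence of Proposition \ref{prosimmetri2000}, by showing that the assumptions on $\Omega$ force the critical value of the moving plane to coincide with the symmetry hyperplane, and then by comparing the moving plane argument run in the direction $\nu$ with the same argument run in the direction $-\nu$.

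First, under the hypotheses that $\Omega$ is symmetric with respect to $T_0^\nu$ and strictly convex in the $\nu$-direction, I would verify that $\lambda_1(\nu)=0$. The symmetry of $\Omega$ yields $(\Omega_\lambda^\nu)'\subset \Omega$ for every $a(\nu)<\lambda\leq 0$, hence $\lambda_1(\nu)\geq 0$. On the other hand, strict convexity with respect to $\nu$ rules out $\lambda_1(\nu)>0$: otherwise, for $\lambda$ slightly positive, reflection of suitable boundary points of $\Omega_\lambda^\nu$ would fall outside $\Omega$, contradicting the definition of $\Lambda_1(\nu)$. The same reasoning gives $\lambda_1(-\nu)=0$.

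Next, passing to the limit $\lambda\nearrow 0$ in \eqref{zxclu:1.8222000} and using the continuity of $u_0$ on $\overline\Omega$, I would obtain
\[
u_0(x)\leq (u_0)_0^\nu(x)\qquad\text{for all } x\in \Omega_0^\nu.
\]
Applying Proposition \ref{prosimmetri2000} to the opposite direction $-\nu$ and using that, by symmetry of $\Omega$, $\Omega_0^{-\nu}=R_0^\nu(\Omega_0^\nu)$, I would get the reverse inequality on $\Omega_0^\nu$. Hence $u_0\equiv (u_0)_0^\nu$ on $\Omega_0^\nu$, which is the required symmetry with respect to $T_0^\nu$. Monotonicity of $u_0$ in the $\nu$-direction on $\Omega_0^\nu$ is then just the content of \eqref{zxclu:1.19222u000} (which actually yields the strict inequality $\partial_\nu u_0>0$).

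Finally, when $\Omega$ is a ball centered at the origin, the above symmetry and monotonicity hold for every direction $\nu\in S^{N-1}$, which forces $u_0$ to be radially symmetric; the strict inequality $\partial u_0/\partial r<0$ for $r\neq 0$ then follows from \eqref{zxclu:1.19222u000} specialized to the radial direction. The only delicate verification, and therefore the main obstacle, is the equality $\lambda_1(\nu)=0$ deduced from the strict convexity of $\Omega$ with respect to $\nu$; however, this step is by now a standard piece of moving plane technology, completely parallel to the arguments in \cite{GNN,BN}, and requires no new idea adapted to the singular setting since all the singular-specific work has been absorbed into Proposition \ref{prosimmetri2000}.
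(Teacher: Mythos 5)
Your proposal is correct and follows exactly the route the paper intends: the paper states Proposition \ref{t3000} without proof as a direct consequence of Proposition \ref{prosimmetri2000}, and the deduction it has in mind is precisely your argument (apply the moving plane conclusion in the directions $\nu$ and $-\nu$, use $\lambda_1(\nu)=0$ from symmetry and strict convexity, and pass to the limit), mirroring the explicit proof of Theorem \ref{t3} given in Section \ref{simmetria2}. No gaps.
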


\section{comparison principles}\label{principi}

Let us start with the following
\begin{lem}\label{lemmaappendix}
Let $\gamma >0$. Consider the function
\begin{equation}\nonumber
g_\gamma(x,y,z,h)\,:=\,x^\gamma(x+y)^\gamma(z+h)^\gamma\,+\,x^\gamma z^\gamma(z+h)^\gamma
\,-\,z^\gamma(x+y)^\gamma(z+h)^\gamma\,-\,x^\gamma z^\gamma(x+y)^\gamma
\end{equation}
and the domain $D\subset \mathbb{R}^4$ defined by
\begin{equation}\nonumber
D\,:=\, \Big\{(x,y,z,h)\,|\, 0\leq x\leq z\, ;\, 0\leq h\leq y \Big\}\,.
\end{equation}
Then  it follows that $g_\gamma \leq 0$ in $D$.
\end{lem}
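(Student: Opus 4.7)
The strategy is to reduce $g_\gamma\le 0$ to a transparent separation-of-variables inequality via a single factorization. Set
$$a=x^\gamma,\qquad b=z^\gamma,\qquad c=(x+y)^\gamma,\qquad d=(z+h)^\gamma.$$
Then the four terms defining $g_\gamma$ become $acd+abd-bcd-abc$, which rearranges as
$$g_\gamma \;=\; ad(b+c)\;-\;bc(a+d).$$
On the open set where $a,b,c,d>0$, dividing by $abcd$ shows that $g_\gamma\le 0$ is equivalent to $\tfrac{1}{b}+\tfrac{1}{c}\le \tfrac{1}{a}+\tfrac{1}{d}$, that is,
$$\frac{1}{x^\gamma}-\frac{1}{z^\gamma}\;\ge\;\frac{1}{(x+y)^\gamma}-\frac{1}{(z+h)^\gamma}.$$

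Next I would introduce the auxiliary function
$$\psi(s,t):=s^{-\gamma}-(s+t)^{-\gamma}\qquad\text{on }(0,\infty)\times[0,\infty),$$
so that the target inequality reads $\psi(x,y)\ge \psi(z,h)$, and I would check the two partial monotonicities. Directly,
$$\partial_t\psi(s,t)=\gamma(s+t)^{-\gamma-1}\ge 0,\qquad \partial_s\psi(s,t)=\gamma\bigl[(s+t)^{-\gamma-1}-s^{-\gamma-1}\bigr]\le 0,$$
the second sign coming from $t\ge 0$ together with the fact that $r\mapsto r^{-\gamma-1}$ is decreasing for $\gamma>0$. Thus $\psi$ is nondecreasing in $t$ and nonincreasing in $s$. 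Since the constraints defining $D$ give $x\le z$ and $h\le y$, one obtains
$$\psi(x,y)\;\ge\;\psi(x,h)\;\ge\;\psi(z,h),$$
which is exactly what we want.

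Finally, the boundary strata of $D$ where at least one of $x,z,x+y,z+h$ vanishes are handled by a short case inspection. The constraint $0\le x\le z$ forces $x=0$ whenever $z=0$, and similarly $x+y=0$ or $z+h=0$ force the corresponding variables to vanish; in each such configuration a direct substitution into $g_\gamma$ gives $0$. Alternatively, $g_\gamma$ is continuous on $D$ and the inequality on the dense open set $\{abcd>0\}$ extends by continuity.

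The only non-routine step is spotting the factorization $g_\gamma=ad(b+c)-bc(a+d)$; everything after that is essentially forced, and the monotonicity check for $\psi$ uses only $\gamma>0$, which is the standing hypothesis of the lemma.
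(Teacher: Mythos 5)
Your proof is correct. It is a mild but genuine reorganization of the paper's argument rather than a verbatim match. The paper first computes $\partial g_\gamma/\partial y\le 0$ on $D$ (using $x\le z$) to reduce to the slice $h=y$, and only then factors $g_\gamma(x,y,z,y)$ as $-\bigl(x^{-\gamma}-z^{-\gamma}+(z+y)^{-\gamma}-(x+y)^{-\gamma}\bigr)\,x^\gamma z^\gamma(z+y)^\gamma(x+y)^\gamma$, finishing with the monotonicity of $\tilde g_\gamma(t)=x^{-\gamma}-z^{-\gamma}+(z+t)^{-\gamma}-(x+t)^{-\gamma}$. You instead factor globally, $g_\gamma=ad(b+c)-bc(a+d)$ with $a=x^\gamma$, $b=z^\gamma$, $c=(x+y)^\gamma$, $d=(z+h)^\gamma$, and reduce everything to $\psi(x,y)\ge\psi(z,h)$ for $\psi(s,t)=s^{-\gamma}-(s+t)^{-\gamma}$, which follows from two one-line partial monotonicities. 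The two proofs rest on the same underlying facts --- your $\partial_s\psi\le 0$ is exactly the paper's statement $\tilde g_\gamma(t)\ge 0$, and your $\partial_t\psi\ge 0$ plays the role of the paper's $\partial g_\gamma/\partial y\le 0$ --- but your version avoids the derivative computation on $g_\gamma$ itself and isolates the structure more cleanly. One small imprecision: on the boundary stratum $x=0$ with $z,y>0$, direct substitution gives $g_\gamma=-z^\gamma y^\gamma(z+h)^\gamma$, which is $\le 0$ but not $0$ as you claim; however, your continuity fallback (the inequality holds on $\{x>0\}\cap D$, which is dense in $D$ and on which all of $a,b,c,d$ are indeed positive) closes this correctly, so the argument stands.
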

\begin{proof}
Since $x\leq z$, by direct calculation we get
\begin{equation}\nonumber
\begin{split}
\frac{\partial g_\gamma}{\partial y}(x,y,z,h)\,=\,
 \gamma x^\gamma(x+y)^{\gamma-1}(z+h)^\gamma \,-\,
\gamma z^\gamma(x+y)^{\gamma-1}(z+h)^\gamma\,-\,\gamma x^\gamma z^\gamma(x+y)^{\gamma-1}\,
\leq 0
\end{split}
\end{equation}
Therefore we are reduced to prove that  $g_\gamma \leq 0$ in $D\cap\{h=y\}$, that is
\[
g_\gamma(x,y,z,y)= x^\gamma(x+y)^\gamma(z+y)^\gamma\,+\,x^\gamma z^\gamma(z+y)^\gamma
\,-\,z^\gamma(x+y)^\gamma(z+y)^\gamma\,-\,x^\gamma z^\gamma(x+y)^\gamma\,
\leq 0\,.
\]
For $x=0$ the thesis follows immediately. For $x>0$ we note that
\[
g_\gamma(x,y,z,y)\,=\,-\Big(\frac{1}{x^\gamma}-\frac{1}{z^\gamma}\,+\,  \frac{1}{(z+y)^\gamma}-\frac{1}{(x+y)^\gamma}\Big)
(x^\gamma z^\gamma  (z+y)^\gamma(x+y)^\gamma)
\]
and the conclusion follows exploiting the fact that, for $0<x\leq z$ fixed, the function
$$
\tilde g_\gamma(t)\,:=\,x^{-\gamma}-z^{-\gamma}+(z+t)^{-\gamma}-(x+t)^{-\gamma}
$$
is increasing in $[0\,,\,\infty)$ and $\tilde g_\gamma(0)=0$.
\end{proof}

\begin{lem}\label{positivity}
Let $u\in\,C(\overline{\Omega})\cap C^2(\Omega)$ be a solution to problem \eqref{problem} with $\gamma>0$. Assume that $\Omega $ is a bounded smooth domain and that $f$ is locally Lipschitz continuous, $f(s)>0$ for $s>0$ and $f(0)\geq 0$.  Let $w$ be given by \eqref{decomposition}. \\
\noindent Then it follows
\[
w>0\qquad\text{in}\qquad \Omega\,.
\]
\end{lem}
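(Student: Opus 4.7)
The plan is to exploit the smoothness of $w:=u-u_0$ (which, unlike $u$ or $u_0$ separately, is unaffected by the boundary singularity) together with the maximum-principle-friendly sign of $s\mapsto s^{-\gamma}$. Since $u,u_0\in C(\overline\Omega)\cap C^2(\Omega)$ are both strictly positive in $\Omega$ and vanish on $\partial\Omega$, the function $w$ lies in $C(\overline\Omega)\cap C^2(\Omega)$ and satisfies $w=0$ on $\partial\Omega$. The right-hand sides of \eqref{problem} and \eqref{problem0} are pointwise finite and smooth inside $\Omega$, so I may subtract the two equations classically to obtain
\begin{equation*}
-\Delta w\,=\,\Big(\frac{1}{u^\gamma}-\frac{1}{u_0^\gamma}\Big)+f(u)\qquad\text{in }\Omega.
\end{equation*}

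First I would establish $w\ge 0$ by contradiction. Suppose $\mathcal{O}:=\{x\in\Omega:w(x)<0\}$ is nonempty. It is open in $\Omega$, and $w\equiv 0$ on $\partial\mathcal{O}$: points of $\partial\mathcal{O}$ interior to $\Omega$ have $w=0$ by continuity, while points of $\partial\mathcal{O}\cap\partial\Omega$ have $w=0$ by the Dirichlet condition. On $\mathcal{O}$ we have $u<u_0$, hence $\frac{1}{u^\gamma}-\frac{1}{u_0^\gamma}>0$, and $f(u)>0$ by $(H_p)$ since $u>0$; thus $-\Delta w>0$ classically on $\mathcal{O}$. The weak minimum principle applied to $w\in C(\overline{\mathcal{O}})\cap C^2(\mathcal{O})$ on the bounded open set $\mathcal{O}$ forces $\min_{\overline{\mathcal{O}}}w=\min_{\partial\mathcal{O}}w=0$, contradicting $w<0$ on $\mathcal{O}$.

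To upgrade to the strict inequality, I would linearize the singular term via the mean value theorem applied to $s\mapsto s^{-\gamma}$ between $u_0$ and $u$: there exists a function $c(x)\ge 0$ in $\Omega$, bounded on every compact subset of $\Omega$, such that $\frac{1}{u^\gamma}-\frac{1}{u_0^\gamma}=-c(x)\,w(x)$. The identity above then becomes
\begin{equation*}
-\Delta w+c(x)\,w=f(u)>0\qquad\text{in }\Omega,\qquad w\ge 0.
\end{equation*}
On each compact subset of $\Omega$ the coefficient $c$ is bounded, so the strong maximum principle applies on balls compactly contained in $\Omega$; together with the connectedness of $\Omega$ this yields either $w\equiv 0$ or $w>0$ throughout $\Omega$, and the first alternative is ruled out because it would force $f(u)\equiv 0$, contradicting $u>0$ and $(H_p)$. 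The only delicate point, and the reason one cannot simply test the equation for $w$ in $H^1_0(\Omega)$, is that $c(x)$ blows up at $\partial\Omega$; this is circumvented by working only with the classical pointwise identity inside $\Omega$ and using the continuity of $w$ up to the boundary to pin down the boundary values.
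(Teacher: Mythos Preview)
Your argument is correct. The treatment of the strict inequality $w>0$ is essentially identical to the paper's: both linearize the difference $\frac{1}{(u_0+w)^\gamma}-\frac{1}{u_0^\gamma}$ in a ball $B_r(x_0)\subset\subset\Omega$ to obtain $-\Delta w+\Lambda w\ge 0$ there, apply the strong maximum principle locally, and propagate by connectedness, reaching a contradiction with $f(u)>0$ if $w\equiv 0$.

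The genuine difference is in the first step, establishing $w\ge 0$. The paper observes that, since $f\ge 0$, the solution $u$ is a super-solution of $-\Delta v=v^{-\gamma}$ in the sense of \cite{CanDeg}, and invokes the comparison result Lemma~2.8 of \cite{CanDeg} to conclude $u\ge u_0$. You instead argue directly: on the open set $\mathcal{O}=\{w<0\}$ the right-hand side $\big(\frac{1}{u^\gamma}-\frac{1}{u_0^\gamma}\big)+f(u)$ is strictly positive, so $w$ is superharmonic there, and the classical weak minimum principle (valid since $w\in C^2(\mathcal{O})\cap C(\overline{\mathcal{O}})$ and $\mathcal{O}$ is bounded) forces $w\ge \min_{\partial\mathcal{O}}w=0$, a contradiction. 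Your route is more elementary and fully self-contained, avoiding the external reference; the paper's route, on the other hand, situates the inequality $u\ge u_0$ within the general sub/super-solution framework of \cite{CanDeg} that underlies the decomposition \eqref{decomposition} itself.
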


 \begin{proof}
Since $u\in  C(\overline \Omega)\cap C^2 (\Omega)$ and
 $u_0\in  C(\overline \Omega)\cap C^2 (\Omega)\,$, then $w\in H^1_0(\Omega)\cap C(\overline \Omega)\cap C^2 (\Omega)\,$.\\
\noindent
By hypothesis on $f$, it follows that $u$ is a super-solution (following Definition 2.5 in \cite{CanDeg}) to the equation
\[
-\Delta v=\frac{1}{v^\gamma}\,.
\]
Therefore, by Lemma 2.8 in \cite{CanDeg} we get that
\[
u\geq u_0\quad\text{in}\,\,\Omega \qquad \text{and therefore}\qquad w\geq 0\quad\text{in}\,\,\Omega\,.
\]
Now let us  show that $w>0$ in the interior of $\Omega$ via the maximum principle exploited in regions where the problem is not singular. More precisely let us assume by contradiction that there exists a point $x_0\in\Omega$ such that $w(x_0)=0$ and let $r=r(x_0)>0$ such that $B_r(x_0)\subset\subset \Omega$. We have, in the classical sense, in $B_r(x_0)$
\[
-\Delta w\,=\,-\Delta u\,+\,\Delta u_0\,=\, \frac{1}{(u_0+w)^\gamma}\,+\,f(u)\,-\,\frac{1}{u_0^\gamma}\geq \, \frac{1}{(u_0+w)^\gamma}-\frac{1}{u_0^\gamma}\,.
\]
Since $u_0(x_0)>0$ we can assume that $u_0$ is positive in $B_r(x_0)$. Therefore we get that
\[
\frac{1}{(u_0+w)^\gamma}-\frac{1}{u_0^\gamma}=c(x)\,(u_0+w-u_0)=c(x)\,w
\]
for some bounded coefficient $c(x)$. Thus there exists $\Lambda>0$ such that
\noindent
$\frac{1}{(u_0+w)^\gamma}-\frac{1}{u_0^\gamma}\,+\,\Lambda \,w\geq 0$ in $B_r(x_0)$, so that
\[
-\Delta w\,+\,\Lambda \,w\geq 0\qquad \text{in}\qquad B_r(x_0)\,.
\]
By the strong maximum principle we  get $w\equiv 0$ in  $B_r(x_0)$ and by a covering argument that $w\equiv 0$ in
$\Omega$. But $w\equiv 0$ in
$\Omega$ implies $f=0$ and we get a contradiction.

\end{proof}

\begin{prop}[A strong maximum principle]\label{pr:wcpmax}
Let $a(\nu)<\lambda<\lambda_1(\nu)$ and $\Omega '$  a sub-domain of $ \Omega_\lambda^\nu$.
Assume that $u\in\,C(\overline{\Omega})\cap C^2(\Omega)$ is  a solution to \eqref{problem} with $f$
satisfying $(H_p)$.\\
\noindent Let $w$ be given by \eqref{decomposition} and assume that
\[
\frac{\partial \,w}{\partial \nu}\geq 0\qquad \text{in}\qquad \Omega '\,.
\]
Then it holds the alternative
\[
\frac{\partial \,w}{\partial \nu}> 0\quad \text{in}\quad  \Omega '\qquad \text{or}\qquad
\frac{\partial \,w}{\partial \nu} = 0\quad \text{in}\quad \Omega'\,.
\]
\end{prop}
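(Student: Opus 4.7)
The strategy is to derive a linear elliptic differential inequality of the form $-\Delta W + c(x) W \geq 0$ for $W := \partial w/\partial\nu$, with coefficient $c$ locally bounded on $\Omega$, and then to invoke the classical strong maximum principle together with a connectedness argument on the open connected set $\Omega'$.

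Subtracting the equations for $u$ and $u_0$ yields, classically in $\Omega$,
\[
-\Delta w \,=\, \frac{1}{u^\gamma} - \frac{1}{u_0^\gamma} + f(u).
\]
Fix $x_0 \in \Omega'$ and a ball $B = B_r(x_0)$ with $\overline{B} \subset \Omega$ and $\overline{B} \subset \Omega'$; for small $t > 0$ set $\phi_t(x) := w(x+t\nu) - w(x)$, $\psi_t(x) := u(x+t\nu) - u(x)$ and $V_t(x) := u_0(x+t\nu) - u_0(x)$, so that $\psi_t = \phi_t + V_t$. The hypothesis $\partial w/\partial\nu \geq 0$ and Proposition \ref{prosimmetri2000} yield respectively $\phi_t \geq 0$ and $V_t \geq 0$ on $B$ for $t$ small, hence $\psi_t \geq V_t \geq 0$. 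Writing the singular increments in integral mean-value form,
\[
\frac{1}{u_t^\gamma} - \frac{1}{u^\gamma} = -\gamma\alpha_t\psi_t, \qquad \frac{1}{(u_0)_t^\gamma} - \frac{1}{u_0^\gamma} = -\gamma\beta_t V_t,
\]
with $\alpha_t,\beta_t > 0$ and, because $u + \sigma\psi_t \geq u_0 + \sigma V_t$ for $\sigma\in[0,1]$, $\alpha_t \leq \beta_t$; and using $f(u_t) - f(u) = c_t\psi_t$ with $0 \leq c_t \leq L$ (from the Lipschitz continuity and monotonicity of $f$), a direct substitution produces
\[
-\Delta\phi_t \,=\, (c_t - \gamma\alpha_t)\phi_t \,+\, \bigl(\gamma(\beta_t - \alpha_t) + c_t\bigr) V_t.
\]
Since the last summand is non-negative, we obtain $-\Delta\phi_t + (\gamma\alpha_t - c_t)\phi_t \geq 0$ on $B$, and the coefficient is uniformly bounded in small $t$ because $u_0$ (hence $u$) is bounded below by a positive constant on $B$.

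Since $w \in C^2(\Omega)$, $\phi_t/t \to W$ in $C^1_{\mathrm{loc}}(\Omega)$ as $t \to 0^+$; moreover $\alpha_t \to 1/u^{\gamma+1}$ pointwise and $c_t$ is uniformly bounded in $L^\infty(B)$. Dividing the inequality by $t$, testing against a non-negative $\varphi \in C_c^\infty(B)$ and passing to the limit (invoking weak-$*$ compactness of $c_t$ along a subsequence) yields
\[
-\Delta W + K(x) W \geq 0 \qquad \text{in } \mathcal{D}'(B)
\]
for some $K \in L^\infty(B)$. Standard elliptic regularity places $W$ in $W^{2,p}_{\mathrm{loc}}(B)$, and the classical strong maximum principle gives the alternative $W > 0$ on $B$ or $W \equiv 0$ on $B$. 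The set $\{x\in\Omega': W(x) = 0\}$ is thereby both open and closed in the connected set $\Omega'$, which gives the global dichotomy.

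The main technical obstacle is the combined presence of the singular term $1/u^\gamma$ and of the merely Lipschitz nonlinearity $f$. The first would be fatal near $\partial\Omega$, but is harmless because we localize to balls compactly contained in $\Omega$, where strict positivity of $u_0$ guarantees bounded linearized coefficients; the second is handled by working with difference quotients rather than attempting to differentiate the equation, so that $f$ contributes only a bounded multiplier $c_t$. The monotonicity of $u_0$ along $\nu$ from Proposition \ref{prosimmetri2000} plays a decisive role: it is precisely the fact $V_t \geq 0$ that allows the $V_t$-terms in the computation of $-\Delta\phi_t$ to be absorbed into a non-negative remainder, leaving a clean linear inequality for $\phi_t$ amenable to the strong maximum principle.
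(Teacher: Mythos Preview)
Your proof is correct and follows the same strategy as the paper: show that $w_\nu:=\partial w/\partial\nu$ satisfies an inequality of the form $-\Delta w_\nu+\Lambda\,w_\nu\geq 0$ with bounded $\Lambda$ on subdomains compactly contained in $\Omega$ (using $(u_0)_\nu\geq 0$ from Proposition~\ref{prosimmetri2000}, $u\geq u_0$ from Lemma~\ref{positivity}, and the monotonicity of $f$ to discard the nonnegative remainder), then apply the classical strong maximum principle and a connectedness argument. The only difference is technical: the paper differentiates the equation directly, writing the $f$--contribution as $f'(u)(w_\nu+{u_0}_\nu)\geq 0$ with a footnote that $f'(u)$ makes sense a.e.\ via Stampacchia's theorem, which yields the inequality in a few lines; your difference--quotient route with a weak--$*$ limit for the $c_t$ coefficient is longer but sidesteps any appeal to the a.e.\ derivative of a merely Lipschitz $f$.
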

\begin{proof}
Let us use the short hand notation
$w_\nu\,:=\,\frac{\partial w}{\partial \nu} $ and ${u_0}_\nu\,:=\,\frac{\partial u_0}{\partial \nu} $. Since $f'\geq 0$ a.e.\footnote{Note that, even if $f'$ exist a.e., the term $f'(u)(w_\nu+{u_0}_\nu)$ makes sense in the weak Sobolev meaning thanks to Stampacchia's Theorem. } by assumption ($H_p$), ${u_0}_\nu\geq 0$ in $\Omega'$ by Proposition \ref{prosimmetri2000},  $u\geq u_0$ by Lemma \ref{positivity} and finally $w_\nu\geq 0$ in $\Omega'$ by assumption, differentiating the equation in \eqref{problem} we get that $w_\nu$ solves
 \begin{equation}\nonumber
 \begin{split}
 &-\Delta w_\nu=-\frac{\gamma}{u^{\gamma+1}}w_\nu\,+\,
 f'(u)(w_\nu+{u_0}_\nu)\,+\,\gamma\,\Big(\frac{1}{u_0^{\gamma+1}}-\frac{1}{u^{\gamma+1}}\Big){u_0}_\nu\\
 &\geq-\frac{\gamma}{u^{\gamma+1}}w_\nu\,,
 \end{split}
 \end{equation}

 \noindent We recall now that $u$ is bounded away from zero in $\Omega'$, and therefore we find $\Lambda >0$ such that
  \begin{equation}\nonumber
 \begin{split}
 &-\Delta w_\nu \geq-\frac{\gamma}{u^{\gamma+1}}w_\nu\geq - \Lambda\,w_\nu\,,
 \end{split}
 \end{equation}
 so that the conclusion follows by the standard strong maximum principle \cite{GT}.

\end{proof}

\begin{prop}[Weak Comparison Principle in small domains]\label{pr:wcp}
Let $a(\nu)<\lambda<\lambda_1(\nu)$ and $\Omega ' \subseteq \Omega_\lambda^\nu$. Assume that $u\in\,C(\overline{\Omega})\cap C^2(\Omega)$ is  a solution to \eqref{problem} with $f$
satisfying $(H_p)$.
Let $w$ be given by \eqref{decomposition} and assume that
\[
w\leq w_\lambda^\nu\qquad \text{on}\qquad \partial \Omega '\,.
\]

Then there exists a positive constant $\delta=\delta\left (u\,,\,f\right)$ such that, if
$ \mathcal{L}( \Omega ') \leq \delta,$ then
$$ w\leq  w_\lambda^\nu \quad \text{in }  \Omega '.$$
\end{prop}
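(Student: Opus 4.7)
The strategy is the standard weak comparison in small domains, adapted to the decomposition $u=u_0+w$. I would take $\varphi:=(w-w_\lambda^\nu)^+$ as test function; by the boundary assumption $\varphi\in H^1_0(\Omega')$. On $\mathrm{supp}\,\varphi$ one has, by Lemma \ref{positivity} applied to the reflected point, $0\leq w_\lambda^\nu<w$, and by Proposition \ref{prosimmetri2000}, $0<u_0\leq u_{0,\lambda}^\nu$.

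Since $u,u_0\in C^2(\Omega)$ with $u_0>0$ in the interior, I would subtract the equations for $u$ and $u_0$ (and do the same for their reflections, valid classically on $(\Omega_\lambda^\nu)'\supset\Omega'$), then test the resulting identity for $-\Delta(w-w_\lambda^\nu)$ against $\varphi$ and integrate by parts on $\Omega'$ to get
\[\int_{\Omega'}|\nabla\varphi|^2\,dx\;=\;I_\gamma\;+\;I_f,\]
where
\[I_\gamma\;=\;\int_{\Omega'}\Big[\tfrac{1}{(u_0+w)^\gamma}-\tfrac{1}{(u_{0,\lambda}^\nu+w_\lambda^\nu)^\gamma}-\tfrac{1}{u_0^\gamma}+\tfrac{1}{(u_{0,\lambda}^\nu)^\gamma}\Big]\varphi\,dx\]
and $I_f$ is the analogous term coming from $f(u)-f(u_\lambda^\nu)$.

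The central step is showing $I_\gamma\leq 0$: a direct algebraic manipulation reveals that the bracket equals
\[\frac{g_\gamma(u_0,w,u_{0,\lambda}^\nu,w_\lambda^\nu)}{u_0^\gamma\,(u_{0,\lambda}^\nu)^\gamma(u_0+w)^\gamma(u_{0,\lambda}^\nu+w_\lambda^\nu)^\gamma},\]
and on $\mathrm{supp}\,\varphi$ the four arguments lie in the domain $D$ of Lemma \ref{lemmaappendix}, which gives $g_\gamma\leq 0$. For $I_f$, monotonicity of $f$ combined with $u_0\leq u_{0,\lambda}^\nu$ yields
\[f(u_0+w)-f(u_{0,\lambda}^\nu+w_\lambda^\nu)\leq f(u_{0,\lambda}^\nu+w)-f(u_{0,\lambda}^\nu+w_\lambda^\nu)\leq L\,\varphi\]
on $\mathrm{supp}\,\varphi$, where $L$ is the Lipschitz constant of $f$ on an interval containing the (bounded) range of the arguments. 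Combining and invoking Hölder together with the Sobolev embedding $H^1_0\hookrightarrow L^{2^*}$, I obtain
\[\int_{\Omega'}|\nabla\varphi|^2\,dx\;\leq\;L\,C_S^2\,\mathcal{L}(\Omega')^{2/N}\int_{\Omega'}|\nabla\varphi|^2\,dx,\]
and choosing $\delta$ such that $L\,C_S^2\,\delta^{2/N}<1$ forces $\varphi\equiv 0$, that is, $w\leq w_\lambda^\nu$ in $\Omega'$.

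The main obstacle is precisely the singular term $I_\gamma$: because $s\mapsto s^{-\gamma}$ fails to be Lipschitz at the origin, one cannot absorb the singular contribution via a standard energy estimate in the way one would for a regular nonlinearity. Lemma \ref{lemmaappendix} is the crucial algebraic device that sidesteps this difficulty by ensuring that $I_\gamma$ has the correct sign and therefore drops out of the inequality, reducing the whole argument to the classical Berestycki--Nirenberg small-domains machinery driven only by the Lipschitz term $f$.
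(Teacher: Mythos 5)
Your proposal is correct and follows essentially the same route as the paper: test with $(w-w_\lambda^\nu)^+$ (the paper approximates it by smooth $\psi_n$ supported in $\mathrm{supp}\,(w-w_\lambda^\nu)^+$ to justify its use in the distributional formulation, then passes to the limit), cancel the singular terms via exactly the algebraic identity reducing the bracket to $g_\gamma$ and Lemma \ref{lemmaappendix}, bound the $f$-term by monotonicity plus local Lipschitz continuity, and absorb via a small-measure Poincar\'e/Sobolev estimate. The only cosmetic difference is that the paper invokes the Poincar\'e inequality with constant $C_p(\Omega')$ small for small $\mathcal{L}(\Omega')$, whereas you make this quantitative through H\"older and the Sobolev embedding; these are equivalent.
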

\begin{proof}
We have
\begin{eqnarray}\label{eq:wcp1}
-\Delta (u_0+w) \,\,\,=\frac{1}{(u_0+w)^\gamma}\,+\,f(u_0+w)\qquad \qquad\text{in }\Omega,\\\label{eq:wcp2}
-\Delta ({u_0}_\lambda^\nu+w_\lambda^\nu) =\frac{1}{({u_0}_\lambda^\nu+w_\lambda^\nu)^\gamma}\,+\,f({u_0}_\lambda^\nu+w_\lambda^\nu) \quad \,\,\,\,\text{in }\Omega,
\end{eqnarray}

Since $(w-w_\lambda^\nu)^+\in H^1_0(\Omega')$ we can consider a sequence of positive functions $\psi_n$ such that
\[
\psi_n\in C_c^\infty(\Omega')\qquad\text{and}\qquad \psi_n\overset{H^1_0(\Omega')}{\longrightarrow}(w-w_\lambda^\nu)^+\,.
\]
We  can also assume that $supp\,\, \psi_n\subseteq supp\,\, (w-w_\lambda^\nu)^+$.
We plug $\psi_n$ into the weak formulation of \eqref{eq:wcp1} and \eqref{eq:wcp2} and subtracting we get

\begin{eqnarray}\label{eq:smm11}
&&\int_{\Omega'}(D(u_0+w)-D({u_0}_\lambda^\nu+w_\lambda^\nu)\,,\,D\psi_n)\,dx\\\nonumber &=&
\int_{\Omega'} \big(\frac{1}{(u_0+w)^\gamma}\,+\,f(u_0+w)-\frac{1}{({u_0}_\lambda^\nu+w_\lambda^\nu)^\gamma}\,-
\,f({u_0}_\lambda^\nu+w_\lambda^\nu)\big)\psi_n\,dx\,.
\end{eqnarray}
Recalling that $u_0$ and ${u_0}_\lambda^\nu$ solve \eqref{problem0} we deduce

\begin{eqnarray}\label{eq:smm112b}
&&\int_{\Omega'}(D(w-w_\lambda^\nu)\,,\,D\psi_n)\,dx=
\int_{\Omega'} \Big(\frac{1}{({u_0}_\lambda^\nu)^\gamma}-\frac{1}{(u_0)^\gamma}
+\frac{1}{(u_0+w)^\gamma}-
\frac{1}{({u_0}_\lambda^\nu+w_\lambda^\nu)^\gamma}
\Big)\psi_n dx\\\nonumber &+&
\int_{\Omega'}\Big( f(u_0+w)-f({u_0}_\lambda^\nu+w_\lambda^\nu)\big)\psi_n\,dx\,.
\end{eqnarray}
Since $u_0\leq {u_0}_\lambda^\nu$ in $\Omega_\lambda^\nu$ and $w\geq w_\lambda^\nu$ on the support of $\psi_n$, by applying Lemma \ref{lemmaappendix} with $u_0=x$, $w=y$, ${u_0}_\lambda^\nu=z$ and ${w}_\lambda^\nu=h$   we get

\begin{equation}\nonumber
\begin{split}
&(u_0)^\gamma(u_0+w)^\gamma({u_0}_\lambda^\nu+w_\lambda^\nu)^\gamma+
(u_0)^\gamma({u_0}_\lambda^\nu)^\gamma({u_0}_\lambda^\nu+w_\lambda^\nu)^\gamma\\
&-({u_0}_\lambda^\nu)^\gamma(u_0+w)^\gamma({u_0}_\lambda^\nu+w_\lambda^\nu)^\gamma-
(u_0)^\gamma({u_0}_\lambda^\nu)^\gamma(u_0+w)^\gamma
\leq 0
\end{split}
\end{equation}
and then $\Big(\frac{1}{({u_0}_\lambda^\nu)^\gamma}-\frac{1}{(u_0)^\gamma}
+\frac{1}{(u_0+w)^\gamma}-
\frac{1}{({u_0}_\lambda^\nu+w_\lambda^\nu)^\gamma}
\Big)\leq 0$.\\
Therefore, by assumption $(H_p)$, we find a constant $C>0$ such that
\begin{eqnarray}\label{eq:smm112btre}
&&\int_{\Omega'}(D(w-w_\lambda^\nu)\,,\,D\psi_n)\,dx
\leq
\int_{\Omega'}\Big( f(u_0+w)-f({u_0}_\lambda^\nu+w_\lambda^\nu)\big)\psi_n\,dx\,\\\nonumber &\leq& \int_{\Omega'}\Big( f({u_0}_\lambda^\nu+w)-f({u_0}_\lambda^\nu+w_\lambda^\nu)\big)\psi_n\,dx \leq C\int_{\Omega'}(w-w_\lambda^\nu)\psi_n\,dx\,.
\end{eqnarray}
We now pass to the limit for $n\rightarrow \infty$,\,we get
\begin{equation}\nonumber
\int_{\Omega'}|D(w-w_\lambda^\nu)^+|^2\,dx\leq C\int_{\Omega'}|(w-w_\lambda^\nu)^+|^2\,dx
\end{equation}
and by Poincar\'{e} inequality
\begin{equation}\nonumber
\int_{\Omega'}|D(w-w_\lambda^\nu)^+|^2\,dx\leq C\,C_p(\Omega')\int_{\Omega'}|D(w-w_\lambda^\nu)^+|^2\,dx\,.
\end{equation}
For $\delta$ small it follows that $C\,C_p(\Omega')<1$ which shows that actually $(w-w_\lambda^\nu)^+=0$ and the thesis follows.

\end{proof}

\begin{lem}[Strong Comparison Principle]\label{positivitycomp}
Let $u\in\,C(\overline{\Omega})\cap C^2(\Omega)$ be a solution to problem \eqref{problem}, with $f$ satisfying $(H_p)$. Let $w$ be given by \eqref{decomposition} and assume that,
 for some $a(\nu)<\lambda\leq \lambda_1(\Omega)$, we have
\[
w\leq w_\lambda^\nu \qquad \text{in}\qquad \Omega_\lambda^\nu\,.
\]
Then $w < w_\lambda^\nu$ in $\Omega_\lambda^\nu$ unless $w \equiv w_\lambda^\nu$ in $\Omega_\lambda^\nu$.
\end{lem}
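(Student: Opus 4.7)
The plan is to set $W := w_\lambda^\nu - w \geq 0$, linearize the equation it satisfies, and derive a differential inequality of the form $-\Delta W + c(x)\,W \geq 0$ on each compact $K \Subset \Omega_\lambda^\nu$ with a bounded coefficient $c$. The classical strong maximum principle (Gilbarg--Trudinger, see \cite{GT}) applied on $K$ will then yield the alternative $W \equiv 0$ or $W > 0$ on $K$, and exhausting $\Omega_\lambda^\nu$ by such compacts and propagating via connectedness gives the stated dichotomy.

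At interior points of $\Omega_\lambda^\nu$ all of $u,\,u_\lambda^\nu,\,u_0,\,(u_0)_\lambda^\nu$ are strictly positive, so subtracting the classical equations for these four functions yields
\[
-\Delta W \;=\; A \;+\; \bigl(f(u_\lambda^\nu)-f(u)\bigr), \qquad A := \tfrac{1}{(u_\lambda^\nu)^\gamma} - \tfrac{1}{u^\gamma} + \tfrac{1}{u_0^\gamma} - \tfrac{1}{((u_0)_\lambda^\nu)^\gamma}.
\]
By Proposition \ref{prosimmetri2000}, $U_0 := (u_0)_\lambda^\nu - u_0 \geq 0$ in $\Omega_\lambda^\nu$; combined with $W \geq 0$ this gives $u_\lambda^\nu - u = U_0 + W \geq 0$, and by the monotonicity of $f$ in $(H_p)$ the term $f(u_\lambda^\nu)-f(u)$ is nonnegative and may be dropped. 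For the singular piece $A$ I would use the identity $t^{-\gamma}-(t+\sigma)^{-\gamma} = \gamma\int_0^{\sigma}(t+s)^{-\gamma-1}\,ds$ for $t>0,\ \sigma \geq 0$, applied to both pairs $(u,u_\lambda^\nu)$ and $(u_0,(u_0)_\lambda^\nu)$, together with the inequality $u \geq u_0$ from Lemma \ref{positivity}, to rewrite
\[
A \;=\; \gamma\!\int_{0}^{U_0}\!\!\bigl[(u_0+s)^{-\gamma-1}-(u+s)^{-\gamma-1}\bigr]\,ds \;-\; \gamma\!\int_{U_0}^{U_0+W}\!\!(u+s)^{-\gamma-1}\,ds.
\]
The first integral is nonnegative (since $u \geq u_0$), and the second is bounded by $W\cdot u_0^{-\gamma-1}$, so $A \geq -\gamma\, u_0^{-\gamma-1}\, W$. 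On any $K \Subset \Omega_\lambda^\nu$, $u_0 \geq m_K > 0$, whence $-\Delta W + C_K W \geq 0$ on $K$ with $C_K := \gamma\, m_K^{-\gamma-1}$, and the strong maximum principle concludes.

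The main obstacle is the familiar one driving the whole paper: $s \mapsto s^{-\gamma}$ is not Lipschitz at $0$, so a naive mean value argument applied to $u^{-\gamma}-(u_\lambda^\nu)^{-\gamma}$ alone produces coefficients that blow up at $\partial\Omega$. The precise cancellation above, which is the same algebraic phenomenon encoded in Lemma \ref{lemmaappendix} and exploited in the proof of Proposition \ref{pr:wcp}, is exactly what absorbs the unwanted contribution proportional to $u^{-\gamma-1}U_0$ arising from the linearization of the singular term, leaving behind a coefficient that is only singular at $\partial\Omega$ and therefore locally bounded on interior compacts, as required for the strong maximum principle to apply.
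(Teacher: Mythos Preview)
Your proof is correct and follows essentially the same approach as the paper: derive the inequality $-\Delta W + \Lambda\,W \geq 0$ locally by showing that the part of the singular term without a sign is proportional to $W$ with a locally bounded coefficient, then apply the classical strong maximum principle together with a covering/connectedness argument. The only cosmetic difference is that the paper decomposes $A$ algebraically by adding and subtracting $\bigl((u_0)_\lambda^\nu + w\bigr)^{-\gamma}$ and invoking the monotonicity of $\tilde g_\gamma$ from the proof of Lemma~\ref{lemmaappendix}, whereas you arrive at the same bound via the integral identity $t^{-\gamma}-(t+\sigma)^{-\gamma}=\gamma\int_0^\sigma (t+s)^{-\gamma-1}\,ds$.
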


 \begin{proof}
 Let us assume  that there exists a point $x_0\in\Omega_\lambda^\nu$ such that $w(x_0)=w_\lambda^\nu(x_0)$ and let $r=r(x_0)>0$ such that $B_r(x_0)\subset\subset \Omega_\lambda^\nu$. We have, in the classical sense, in $B_r(x_0)$
\begin{equation}\label{fkjebdbdbvkbvk}
\begin{split}
&-\Delta (w_\lambda^\nu-w)\,=\,-\Delta (u_\lambda^\nu-{u_0}_\lambda^\nu)\,+\,\Delta (u- u_0)
\\
&= \Big(\frac{1}{u_0^\gamma}-\frac{1}{({u_0}_\lambda^\nu)^\gamma}\,+\,  \frac{1}{({u_0}_\lambda^\nu+w)^\gamma}-\frac{1}{(u_0+w)^\gamma}  \Big)\,+\,\Big(f({u_0}_\lambda^\nu+w_\lambda^\nu)-f(u_0+w)\Big)\\
&+ \frac{1}{({u_0}_\lambda^\nu+w_\lambda^\nu)^\gamma}-\frac{1}{({u_0}_\lambda^\nu+w)^\gamma}\,.
\end{split}
\end{equation}
Since $f$ is non-decreasing by assumption, $u_0\leq {u_0}_\lambda^\nu$ in $\Omega_\lambda^\nu$ by Proposition \ref{prosimmetri2000} and  $w\leq w_\lambda^\nu$ in $\Omega_\lambda^\nu$,  we get
 $$f({u_0}_\lambda^\nu+w_\lambda^\nu)-f(u_0+w)\geq 0.$$
Moreover, since for $0<a\leq b$ the function
$
g(t)\,:=\,a^{-\gamma}-b^{-\gamma}+(b+t)^{-\gamma}-(a+t)^{-\gamma}\,
$
is increasing in $[0\,,\,\infty)$, we also have
\[
\Big(\frac{1}{u_0^\gamma}-\frac{1}{({u_0}_\lambda^\nu)^\gamma}\,+\,  \frac{1}{({u_0}_\lambda^\nu+w)^\gamma}-\frac{1}{(u_0+w)^\gamma}  \Big)\geq 0\,.
\]
and by \eqref{fkjebdbdbvkbvk}\, we get
\begin{equation}\nonumber
\begin{split}
-\Delta (w_\lambda^\nu-w)\geq
 \frac{1}{({u_0}_\lambda^\nu+w_\lambda^\nu)^\gamma}-\frac{1}{({u_0}_\lambda^\nu+w)^\gamma}\,.
\end{split}
\end{equation}
Since ${u_0}_\lambda^\nu(x_0)>0$, arguing as in Lemma \ref{positivity},  we  find  $\Lambda>0$ such that, eventually reducing $r$, it results
$\frac{1}{({u_0}_\lambda^\nu+w_\lambda^\nu)^\gamma}-\frac{1}{({u_0}_\lambda^\nu+w)^\gamma}+\Lambda \,(w_\lambda^\nu-w)\geq 0$ in $B_r(x_0)$, so that
\[
-\Delta (w_\lambda^\nu-w)\,+\,\Lambda \,(w_\lambda^\nu-w)\geq 0\qquad \text{in}\qquad B_r(x_0)\,.
\]
By the strong maximum principle \cite{GT} it follows  $w_\lambda^\nu-w\equiv 0$ in  $B_r(x_0)$, and by a covering argument $w_\lambda^\nu-w\equiv 0$ in
$\Omega_\lambda^\nu$, proving the result.
\end{proof}

\section{symmetry}\label{simmetria}
\begin{prop}\label{prosimmetri2}
Let $u\in\,C(\overline{\Omega})\cap C^2(\Omega)$ be a solution to \eqref{problem}. Let $w$ be given by \eqref{decomposition}.\\
\noindent Then, for any $$a(\nu)<\lambda<\lambda_1(\nu)$$ we have
\begin{equation}\label{zxclu:1.8222}
w(x)< w_{\lambda}^\nu(x),\qquad \forall x \in \Omega _{\lambda}^\nu.
\end{equation}
Moreover
\begin{equation}\label{zxclu:1.9222u}
\frac{\partial w}{\partial \nu}(x) > 0, \qquad \forall x \in
\Omega _{\lambda_1(\nu)}^\nu.
\end{equation}
Finally, \eqref{zxclu:1.8222} and \eqref{zxclu:1.9222u} hold true replacing $w$ by $u$.
\end{prop}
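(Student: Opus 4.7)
The plan is to run a moving plane argument on $w$, using the tools assembled in Section~\ref{principi}: the Weak Comparison Principle in small domains (Proposition~\ref{pr:wcp}), the Strong Comparison Principle (Lemma~\ref{positivitycomp}), the Strong Maximum Principle for $w_\nu$ (Proposition~\ref{pr:wcpmax}), and the positivity of $w$ (Lemma~\ref{positivity}). Define
\[
\bar\lambda:=\sup\Bigl\{\lambda\in(a(\nu),\lambda_1(\nu)):\ w\leq w_t^\nu\text{ in }\Omega_t^\nu\text{ for all }a(\nu)<t\leq\lambda\Bigr\}.
\]
The objective is to prove $\bar\lambda=\lambda_1(\nu)$ and then upgrade to strict inequality via Lemma~\ref{positivitycomp}; the derivative estimate \eqref{zxclu:1.9222u} then follows from Proposition~\ref{pr:wcpmax} applied to $w_\nu$; and the statements for $u$ follow by summing with Proposition~\ref{prosimmetri2000}.

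\textbf{Starting the procedure.} For $\lambda$ just above $a(\nu)$ the set $\Omega_\lambda^\nu$ has small Lebesgue measure. On $\partial\Omega_\lambda^\nu\cap T_\lambda^\nu$ one has $w=w_\lambda^\nu$ by definition of the reflection, and on $\partial\Omega_\lambda^\nu\cap\partial\Omega$ one has $w=0\leq w_\lambda^\nu$ because the reflected point is interior to $\Omega$ (since $\lambda<\lambda_1(\nu)$) and Lemma~\ref{positivity} gives $w_\lambda^\nu>0$ there. Hence Proposition~\ref{pr:wcp} is applicable and yields $w\leq w_\lambda^\nu$ in $\Omega_\lambda^\nu$ for such $\lambda$, so $\bar\lambda$ is well-defined and by continuity $w\leq w_{\bar\lambda}^\nu$ in $\Omega_{\bar\lambda}^\nu$.

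\textbf{Pushing to $\lambda_1(\nu)$.} Assume by contradiction $\bar\lambda<\lambda_1(\nu)$. By Lemma~\ref{positivitycomp} applied at $\bar\lambda$, either $w\equiv w_{\bar\lambda}^\nu$ or $w<w_{\bar\lambda}^\nu$ strictly in $\Omega_{\bar\lambda}^\nu$. The identity is impossible: on the boundary portion $\partial\Omega\cap\partial\Omega_{\bar\lambda}^\nu$ we have $w=0$ while $w_{\bar\lambda}^\nu>0$, since the reflected points lie strictly inside $\Omega$ and Lemma~\ref{positivity} applies. So the strict inequality holds. Now choose a compact $K\subset\Omega_{\bar\lambda}^\nu$ with $|\Omega_{\bar\lambda}^\nu\setminus K|<\delta/2$ (with $\delta$ from Proposition~\ref{pr:wcp}); by uniform continuity $w<w_\lambda^\nu$ on $K$ for all $\lambda\in[\bar\lambda,\bar\lambda+\tau]$, $\tau$ small, and then on $\Omega_\lambda^\nu\setminus K$, whose measure stays below $\delta$, we verify the boundary inequality and re-apply Proposition~\ref{pr:wcp} to conclude $w\leq w_\lambda^\nu$ on all of $\Omega_\lambda^\nu$. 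This contradicts the definition of $\bar\lambda$, so $\bar\lambda=\lambda_1(\nu)$. Lemma~\ref{positivitycomp} at each $\lambda\in(a(\nu),\lambda_1(\nu))$ then rules out the identity case (same boundary argument) and delivers the strict inequality \eqref{zxclu:1.8222}.

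\textbf{Derivative and the conclusion for $u$.} From \eqref{zxclu:1.8222}, for any $x\in\Omega_{\lambda_1(\nu)}^\nu$ and any $t>0$ small we may reflect across the midplane between $x$ and $x+t\nu$ to get $w(x)\leq w(x+t\nu)$, hence $\partial w/\partial\nu\geq 0$ throughout $\Omega_{\lambda_1(\nu)}^\nu$. Proposition~\ref{pr:wcpmax} then forces the dichotomy on each connected subdomain $\Omega'$; the possibility $\partial w/\partial\nu\equiv 0$ is excluded because it would make $w$ constant along $\nu$-lines, contradicting \eqref{zxclu:1.8222}, so \eqref{zxclu:1.9222u} follows. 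Since $u=u_0+w$ and Proposition~\ref{prosimmetri2000} gives the analogous statements for $u_0$, the inequalities persist under addition, giving \eqref{zxclu:1.8222} and \eqref{zxclu:1.9222u} for $u$.

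The main obstacle I expect is the extension step past $\bar\lambda$: it must be carried out so that the compact piece on which strict inequality is preserved by continuity, and the small-measure residual set to which Proposition~\ref{pr:wcp} is applied, together cover $\Omega_\lambda^\nu$ while the boundary inequality is simultaneously satisfied on both $T_\lambda^\nu$, $\partial\Omega$ and the interface $\partial K\cap\Omega_\lambda^\nu$. The singularity is not an obstacle at this stage since it has been fully absorbed into Propositions~\ref{pr:wcp}, \ref{pr:wcpmax} and Lemma~\ref{positivitycomp}, through the algebraic identity of Lemma~\ref{lemmaappendix}.
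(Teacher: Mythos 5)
Your proposal is correct and follows essentially the same route as the paper: start the moving plane for $\lambda$ near $a(\nu)$ via Lemma \ref{positivity} and Proposition \ref{pr:wcp}, push the critical value $\lambda_0$ to $\lambda_1(\nu)$ by the compact-set/small-measure argument combined with Lemma \ref{positivitycomp} (excluding the identity case through the boundary data), and then obtain \eqref{zxclu:1.9222u} from Proposition \ref{pr:wcpmax} and the statements for $u$ by adding Proposition \ref{prosimmetri2000}. No substantive differences from the paper's proof.
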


\begin{proof}
Let $\lambda > a(\nu)$. Since $w>0$ in $\Omega$ by Lemma \ref{positivity} we have:
\[
w\leq w_\lambda^\nu \qquad \text{on}\qquad \partial \Omega_\lambda^\nu\,.
\]
 Therefore, assuming that $\mathcal{L}(\Omega_\lambda^\nu)$ is sufficiently small (say for $\lambda-a(\nu)$ sufficiently small) so that Proposition \ref{pr:wcp} applies, we get
\begin{equation}\label{fjgfjf}
w\leq w_\lambda^\nu \qquad \text{in}\qquad  \Omega_\lambda^\nu\,,
\end{equation}
and actually $w< w_\lambda^\nu$ in $\Omega_\lambda^\nu$ by the Strong Comparison Principle
(Lemma \ref{positivitycomp}).\\
\noindent Let us  define
\begin{equation}\nonumber
\Lambda_0=\{\lambda > a(\nu): w\leq w_{t}^\nu\,\,\,\text{in}\,\,\,\Omega_t^\nu\,\,\,\text{for all
$t\in(a(\nu),\lambda]$}\}
 \end{equation}
 which is not empty thanks to \eqref{fjgfjf}. Also set
 \begin{equation}\nonumber
\lambda_0=\sup\,\Lambda_0.
\end{equation}
By the definition of $\lambda_1(\nu)$, to prove our result we have to show that actually  $\lambda_0 = \lambda_1(\nu)$.\\
\noindent
Assume otherwise that $\lambda_0 < \lambda_1(\nu)$ and note that, by continuity, we obtain $w\leq w_{\lambda_0}^\nu$ in $\Omega_{\lambda_0}^\nu$.
By the Strong Comparison Principle (Lemma \ref{positivitycomp}), it follows $w< w_{\lambda_0}^\nu$ in $\Omega_{\lambda_0}^\nu$
unless  $w= w_{\lambda_0}^\nu$ in $\Omega_{\lambda_0}^\nu$.
Because of  the zero Dirichlet boundary conditions and since $w>0$ in the interior of the domain,
 the case $w\equiv w_{\lambda_0}^\nu$ in $\Omega_{\lambda_0}^\nu$ is  not possible. Thus $w< w_{\lambda_0}^\nu$ in $\Omega_{\lambda_0}^\nu$.\\
We can now consider $\delta $ given by Proposition \ref{pr:wcp} so that the Weak Comparison Principle holds true in any sub-domain $\Omega'$ if $\mathcal{L}(\Omega')\leq \delta$. Fix a compact set $\mathcal{K} \subset\Omega_{\lambda_0}^\nu$ so that $\mathcal{L}(\Omega_{\lambda_0}^\nu\setminus \mathcal{K})\leq \frac{\delta}{2}$.
By compactness we find $\sigma >0$ such that
\[
w_{\lambda_0}^\nu-w\geq 2\sigma>0\,\quad \text{in}\quad \mathcal{K}\,.
\]
Take now  $\bar\varepsilon>0$ sufficiently small so that $\lambda_0+\bar\varepsilon<\lambda_1(\nu)$ and,
 for any $0<\varepsilon\leq \bar\varepsilon$
\begin{itemize}
\item[$a)$] $w_{\lambda_0+\varepsilon}^\nu-w\geq \sigma>0$ in $\mathcal{K}\,$,
\item[$b)$] $\mathcal{L}(\Omega_{\lambda_0+\varepsilon}^\nu\setminus \mathcal{K})\leq \delta\,$.
\end{itemize}
Taking into account $a)$ it is now easy to check  that, for any $0<\varepsilon\leq \bar\varepsilon$, we have that $w\leq w_{\lambda_0+\varepsilon}^\nu$ on the boundary of $\Omega_{\lambda_0+\varepsilon}^\nu\setminus \mathcal{K}$. Consequently, by $b)$, we can apply the Weak Comparison Principle  (Proposition \ref{pr:wcp}) and deduce that
\[
w\leq w_{\lambda_0+\varepsilon}^\nu\qquad \text{in} \qquad\Omega_{\lambda_0+\varepsilon}^\nu\setminus \mathcal{K}\,.
\]
Thus  $w\leq w_{\lambda_0+\varepsilon}^\nu$ in $\Omega_{\lambda_0+\varepsilon}^\nu$\, and by applying the Strong Comparison Principle
(Lemma \ref{positivitycomp})\, we have
$w< w_{\lambda_0+\varepsilon}^\nu$ in $\Omega_{\lambda_0+\varepsilon}^\nu$ . We get a contradiction with the definition of $\lambda_0$ and conclude that actually $\lambda_0=\lambda_1(\nu)$. Then \eqref{zxclu:1.8222} is proved.\\

\noindent It follows now directly from simple geometric considerations and by \eqref{zxclu:1.8222} that
$w$ is monotone non-decreasing in $\Omega_{\lambda_1(\nu)}^\nu$ in the $\nu-$direction. This gives
$$\frac{\partial w}{\partial \nu}(x) \geq 0\quad \text{in}\quad\Omega _{\lambda_1(\nu)}^\nu\,,$$ so it is standard to deduce  \eqref{zxclu:1.9222u} from Proposition \ref{pr:wcpmax}.\\
\noindent To prove that
 \eqref{zxclu:1.8222} and \eqref{zxclu:1.9222u} hold true replacing $w$ with $u$, just recall that
 \[
 u=u_0+w\,,
 \]
 and exploit  Proposition \ref{prosimmetri2000}.
\end{proof}

\section{Proof of Theorem \ref{t3}}\label{simmetria2}

The proof of Theorem \ref{t3} is now a direct consequence of Proposition \ref{prosimmetri2}. Observing that by assumption
\[
\lambda _1(\nu)=0\,,
\]
 we can apply Proposition \ref{prosimmetri2} in the $\nu-$direction to get
\begin{equation}\nonumber
u(x)\leq u_{\lambda_1(\nu)}^\nu(x),\qquad \forall x \in \Omega _{0}^\nu.
\end{equation}
and in the $(-\nu)-$direction to get
\begin{equation}\nonumber
u(x)\geq u_{\lambda_1(\nu)}^\nu(x),\qquad \forall x \in \Omega _{0}^\nu.
\end{equation}
Therefore $u(x)\equiv u_{\lambda_1(\nu)}^\nu(x)$ in  $\Omega$. The monotonicity of $u$ follows by \eqref{zxclu:1.9222u}.
%
%
%
%

\bigskip


\begin{thebibliography}{99}

\bibitem{BN} {\sc H.~Berestycki, L.~Nirenberg},
\newblock On the method of moving planes and the sliding method.
\newblock {\em Bulletin Soc. Brasil. de Mat Nova Ser}, \textbf{22}(1), 1--37, 1991.




     \bibitem{boccardo2}{\sc L.~Boccardo},
\newblock A Dirichlet problem with singular and supercritical
              nonlinearities.
\newblock  {\em Non. Anal.}, \textbf{75}(12) (2012), 4436--4440.


\bibitem{boccardo}{\sc L.~Boccardo,  and L.~Orsina},
\newblock Semilinear elliptic equations with singular nonlinearities.
\newblock  {\em Calc. Var. Partial Differential Equations}, \textbf{37}(3-4) (2010), 363--380.

\bibitem{Cancan}{\sc A.~Canino},
\newblock Minimax methods for singular elliptic equations with an
              application to a jumping problem.
\newblock  {\em J. Differential Equations}, \textbf{221}(1) (2006), 210--223.


\bibitem{CanDeg}{\sc A.~Canino,  and M.~Degiovanni},
\newblock A variational approach to a class of singular semilinear
              elliptic equations.
\newblock  {\em J. Convex Anal.}, \textbf{11}(1) (2004), 147--162.

\bibitem{gras}{\sc A.~Canino,  M. Grandinetti and B. Sciunzi},
\newblock A jumping problem for some singular semilinear elliptic equations.
\newblock  {\em preprint}.

\bibitem{crandall}{\sc M.G. Crandall, P.H. Rabinowitz and L. Tartar},
\newblock On a Dirichlet problem with a singular nonlinearity.
\newblock  {\em Comm. P.D.E.}, \textbf{2} (1977), 193--222.


\bibitem{GOW} {\sc J. ~A.~Gatica, V.~Oliker, P.~Waltman},
\newblock Singular nonlinear boundary value problems for second order ordinary differential equations.
\newblock  {\em J. Differential Equations}, \textbf{79} (1989), 62--78.

\bibitem{GNN}{\sc B.~Gidas, W.~M.~Ni, and L.~Nirenberg},
\newblock Symmetry and related properties via the maximum principle.
\newblock  {\em Comm. Math. Phys.}, \textbf{68} (1979), 209--243.





\bibitem{GT}
{\sc D.~Gilbarg, N.~S.~Trudinger},
Elliptic partial differential equations of second order.
{\em Reprint of the 1998 Edition}, Springer.


\bibitem{saccon}{\sc N.~Hirano, C.~Saccon, and N.~Shioji},
\newblock Multiple existence of positive solutions for singular elliptic problems with concave and convex nonlinearities.
\newblock  {\em Adv. Diff. Eq.}, \textbf{9} (2004), 197--220.


\bibitem{saccon2}{\sc N.~Hirano, C.~Saccon, and N.~Shioji},
\newblock Brezis-Nirenberg type theorems and multiplicity of positive
              solutions for a singular elliptic problem.
\newblock  {\em J. Differential Equations}, \textbf{245}(8) (2008), 1997--2037.

\bibitem{lair} {\sc A.V. Lair and A.W. Shaker},
\newblock Classical and weak solutionsof a singular semilinear elliptic problem,
\newblock {\em J. Math. Anal. Appl.}, \textbf{211}(2) (1977), 371--385.

\bibitem{lazer}{\sc A.C. Lazer, P.J. McKenna},
\newblock On a singular nonlinear elliptic boundary-value problem.
\newblock  {\em Proc. AMS}, \textbf{111} (1991), 721--730.


\bibitem{serrin} {\sc J.~Serrin},
\newblock A symmetry problem in potential theory.
\newblock {\em Arch. Rational Mech. Anal}, \textbf{43}(4), (1971), 304--318.


\bibitem{marco} {\sc M.Squassina},
\newblock Boundary behavior for a singular quasi-linear elliptic
              equation.
\newblock {\em J. Math. Anal. Appl.}, \textbf{393}(2), (2012), 692--696.


\bibitem{stuart}{\sc C.A. Stuart},
\newblock Existence and approximation of solutions of nonlinear elliptic equations.
\newblock  {\em Math. Z.}, \textbf{147} (1976), 53--63.
\end{thebibliography}
\end{document}